\documentclass[a4paper,12pt]{article}
\usepackage[margin=1in]{geometry}
\usepackage{amsmath,amsthm,amssymb,amsfonts}
\usepackage[hidelinks]{hyperref}

\newtheorem{proposition}{Proposition}
\newtheorem{theorem}{Theorem}

\title{An optimal refinement-compatible bijection between singleton-free partitions and partitions without cyclic adjacencies}
\author{Vuong Bui\thanks{\texttt{bui.vuong@yandex.ru}}}
\date{}

\begin{document}

\maketitle

\begin{abstract}
    It is well known that the number of partitions of $[n]$ without singletons equals the number of partitions of $[n]$ in which no block contains two cyclically adjacent elements $i,i+1\pmod{n}$. Bernhart remarked that there might be no simple bijection between these two classes. Although Callan later constructed an algorithmic bijection proving the stronger equidistribution of singletons and adjacencies, his construction proceeds through multiple rounds of exchanges. Therefore, Bernhart's remark may still retain some validity, as suggested by Chen and Wang. In this article, we address this remark by giving a direct ``one-round'' bijection between the two classes. Unlike Callan's bijection, our map is closely compatible with the refinement order on partitions: in one direction it only decomposes blocks, while its inverse only merges blocks, with a single exceptional pair when $n>2$ is even. We further observe that this exception is unavoidable, establishing the optimality of the bijection with respect to the refinement order. The specific local form of these operations --- splitting off only singleton blocks and merging a singleton only with the block containing its cyclic neighbor --- also ensures that the construction restricts, without modification, to a bijection between the corresponding classes of noncrossing partitions.
\end{abstract}

\section{Introduction}

The number of partitions of $[n]=\{1,\dots,n\}$ without singletons is well known to be equal to the number of partitions of $[n]$ without blocks containing two cyclically adjacent elements $i,i+1 \pmod{n}$, using elementary approaches like the inclusion--exclusion principle or recurrences \cite{bernhart1999catalan}. In fact, the two sequences share the same entry OEIS A000296 \cite{OEISA000296}.
We call these two classes singleton-free partitions and adjacency-free partitions in this article. The latter class has several names in the literature. These
partitions are also counted by the graphical Bell number of the cycle \cite{duncan2009bell}. In fact, the case of other graphs could also be interesting. Bernhart \cite{bernhart1999catalan} addressed this coincidence and remarked that there might be no simple bijection between these two sets. Callan \cite{callan2005conjugates} gave a bijection that even shows that the number of partitions of $[n]$ with $k$ singletons is the same as the number of partitions of $[n]$ with $k$ adjacencies. The bijection, which is algorithmic in nature, uses multiple rounds in which singletons and adjacencies are interchanged in each round. Further study with formulae involving this parameter $k$ is given in \cite{mansour2014set}. Although Callan's algorithm is combinatorial, it is somewhat involved and it may not contradict Bernhart's remark. Indeed, if we use such an algorithm to prove the original coincidence of two numbers, we still have to run multiple rounds to turn the singletons in an adjacency-free partition into adjacencies, and vice versa.
Chen and Wang \cite{chen2011singletons} remarked that there is still some truth in Bernhart's remark, despite Callan's beautiful result.
The purpose of this article is to provide a simple ``one-round'' bijection that addresses the requirement of simplicity in Bernhart's remark. The bijection, which may distort the numbers of singletons and adjacencies greatly, does not prove as strong a result as Callan's algorithm. However, the ``one-round'' nature can be seen to be more elementary and more transparent. An advantage over Callan's bijection is that our bijection only decomposes blocks into smaller blocks in one direction and merges blocks in the other direction, with only one exceptional pair.
This gives a nice property on the refinement order of set partitions. Recall that the set of partitions of $[n]$, ordered by refinement, forms a
partition lattice, see \cite[Chapter 3]{stanley2011enumerative}. We write $P_1 \preceq P_2$ if every block of $P_1$ is contained in a block of $P_2$.
In other words, we have the following result, which even works for noncrossing partitions.
\begin{theorem}
\label{thm:main}
    There exists an explicit bijection between singleton-free partitions $P$ of $[n]$ and adjacency-free partitions $Q$ of $[n]$ so that $Q\preceq P$ for every matching pair $(P,Q)$, with precisely one exceptional pair when $n>2$ is even. Moreover, the exception is unavoidable for such an $n$ in any bijection. The bijection also restricts naturally to noncrossing partitions without modification.
\end{theorem}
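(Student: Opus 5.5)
The plan has three parts, proved in this order: the unavoidability of the exception, then the construction of the bijection, then its restriction to noncrossing partitions. The construction is the part I am least sure of.

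\textbf{Unavoidability of the exception.} I would start here, because it pins down which pair must be exceptional. Let $n>2$ be even, and consider the two singleton-free partitions into cyclically adjacent pairs,
\[
P_1=\{\{1,2\},\{3,4\},\dots,\{n-1,n\}\},\qquad P_2=\{\{2,3\},\{4,5\},\dots,\{n,1\}\}.
\]
Take an adjacency-free $Q$ with $Q\preceq P_j$. Each block of $Q$ lies inside a pair $\{i,i+1\}$, and that pair is itself an adjacency. So every block of $Q$ is a singleton, and $Q$ is the discrete partition $\hat 0$. Both $P_1$ and $P_2$ therefore dominate only $\hat 0$. Hall's condition fails for $\{P_1,P_2\}$, so any bijection has at least one pair $(P,Q)$ with $Q\not\preceq P$. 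For odd $n$ there is no such perfect matching into adjacent pairs, and for $n=2$ we have $P_1=P_2$. This explains the parity restriction in Theorem~\ref{thm:main}.

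\textbf{The bijection.} I would define a map $\varphi$ from singleton-free $P$ to adjacency-free $Q$ that only splits off singletons. The first candidate is: within each block $B$ of $P$, and for every maximal cyclic run $i,i+1,\dots,j$ of $B$, split off $i,\dots,j-1$ as singletons and keep $j$ in $B$. The surviving part of each block then contains no two cyclically adjacent elements, so $\varphi(P)$ is adjacency-free. Also $\varphi(P)\preceq P$, and the only operation is splitting off singletons.

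The inverse $\psi$ should merge each singleton of $Q$ into the block of its cyclic neighbour. Concretely, for each maximal run of consecutive singletons $\{i\},\dots,\{j\}$ in $Q$, merge the run into the block containing $j+1$.

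\textbf{Main obstacle: injectivity.} The hard step is making this decoding unambiguous. A block $\{i,i+1\}$ of $P$ leaves the two singletons $\{i\},\{i+1\}$ in $Q$. A run of singletons can therefore come either from a whole short run-block of $P$, or from the head of a longer block.

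I would fix this with a canonical convention. A run of singletons in $Q$ whose right neighbour $j+1$ lies in a non-singleton block is attached to that block. A run whose right neighbour is also a singleton, which is possible only because $j+1$ was kept as the last element of a run, is split into consecutive segments that end at kept elements. I would then choose the direction of cutting so that both possibilities are distinguished by whether $j+1$ is a singleton of $Q$.

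I would first verify $\psi\circ\varphi=\mathrm{id}$, and then show that $\psi(Q)$ is singleton-free for every adjacency-free $Q$. Two places can break. The first is $Q=\hat 0$. The second is a run covering the whole cycle, which happens when $P=\{[n]\}$ or when the blocks tile the cycle by adjacent pairs.

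This is exactly where the Hall obstruction above appears. When $n$ is odd, $\hat 0$ should have a unique preimage, namely $\{[n]\}$ or a fixed tiling. When $n$ is even, two partitions, say $\{[n]\}$ and $P_2$, compete for $\hat 0$. One of them must be sent to the leftover adjacency-free partition, which I expect to be $\{\text{odds}\}\,|\,\{\text{evens}\}$; that is the single exceptional pair.

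I would confirm the whole construction against the small cases $n=4,5,6$, and against the equal cardinalities from OEIS A000296.

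\textbf{Noncrossing restriction.} This part is straightforward. Splitting off singletons preserves noncrossingness. Merging a run of singletons $i,\dots,j$ into the block of the adjacent element $j+1$ also creates no crossing, because the inserted elements form a cyclic interval adjacent to that block. The exceptional pair consists of $\{[n]\}$, or the tiling $P_2$, together with the alternating partition, and these are handled by the same case analysis within the noncrossing class.
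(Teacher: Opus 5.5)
Your unavoidability argument is correct; it is the paper's argument, phrased via Hall's condition. Your noncrossing reasoning (split off singletons; merge only into the block of a cyclic neighbour) also matches the paper. The genuine gap is that no bijection is actually constructed.

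Your candidate $\varphi$ keeps only the last element of each maximal run. It is not injective, even away from the boundary cases. For $n=7$, the partitions $\{\{1,2\},\{3,4,6\},\{5,7\}\}$, $\{\{1,2,3\},\{4,6\},\{5,7\}\}$ and $\{\{1,2,3,4,6\},\{5,7\}\}$ all map to $\{\{1\},\{2\},\{3\},\{4,6\},\{5,7\}\}$. The reason is that $\varphi$ forgets where the block boundaries lie inside a run of singletons. Likewise, every partition of $[n]$ into cyclic intervals maps to $\hat 0$; for $n=5$ there are five of them. So your expectation that $\hat 0$ has a unique preimage for odd $n$ is false. No convention on the decoding side can repair this, because the defect is in $\varphi$ itself. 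The convention you sketch is also vacuous: the right neighbour $j+1$ of a \emph{maximal} run of singletons is never a singleton, unless $Q=\hat 0$.

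The missing idea is to split off only every second element. In a maximal run $i,\dots,i+t$ of a block, remove $i+t-1,i+t-3,\dots$ and keep $i+t,i+t-2,\dots$. This has three consequences:
\begin{itemize}
\item each removed $x$ has $x+1$ surviving in its block;
\item every block of size at least $3$ keeps at least two elements;
\item two consecutive singletons $\{x\},\{x+1\}$ preceded by a non-singleton $x-1$ can only come from a $2$-block $\{x,x+1\}$ of $P$.
\end{itemize}
The inverse therefore pairs each maximal singleton run $\{j\},\dots,\{j+t\}$ as $\{j,j+1\},\{j+2,j+3\},\dots$. If the run has odd length, it attaches $j+t$ to the block containing $j+t+1$.

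These maps break down only at a few partitions, which are matched by hand.
\begin{itemize}
\item For odd $n$, they break down only at $\{[n]\}$ and $\hat 0$, which are matched to each other.
\item For even $n$, they break down at $\{[n]\}$, $P_1$ and $P_2$ on one side. On the other side they break down at $\hat 0$ and at the two partitions formed by all odd (resp.\ all even) singletons plus one block of the remaining elements. Matching $\{[n]\}$ with the odd singletons plus the even block, $P_1$ with $\hat 0$, and $P_2$ with the even singletons plus the odd block gives exactly one exception: the pair containing $P_2$.
\end{itemize}

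Your guess $\{\text{odds}\}\,|\,\{\text{evens}\}$ for the leftover partition would actually break the last claim of the theorem. For even $n\ge 4$ it is crossing, while $\{[n]\}$ and $P_2$ are both noncrossing. Pairing it with either of them prevents the bijection from restricting to noncrossing partitions. Moreover, pairing it with $\{[n]\}$ would not even give an exceptional pair.
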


The fact that the bijection also restricts to noncrossing partitions suggests that it satisfies an additional structural property beyond its compatibility with
the refinement order. In particular, in the process of decomposition, we will replace a block $B$ by a subset of $B$ and several singletons, so no new crossing can be created. In the other direction of merging blocks, we only merge a singleton $\{k\}$ to the block containing $k+1 \pmod{n}$. Such an operation also preserves noncrossingness: since $k,k+1\pmod{n}$ are consecutive, any crossing involving the newly added element $k$ would already yield a crossing involving $k+1$.

Our bijection, together with the proof of Theorem \ref{thm:main}, is presented in Section \ref{sec:bijection}. 

We can observe for now that one exceptional pair is unavoidable.
Indeed, suppose we have a perfect block-refining bijection (with no exception) for an even $n>2$. Consider the two singleton-free partitions (which are also non-crossing)
\begin{equation*}
    \begin{gathered}
    \left\{\{1,2\}, \{3,4\},\dots, \{n-1,n\}\right\},\\
    \left\{\{n,1\},\{2,3\},\dots, \{n-2,n-1\}\right\}.
    \end{gathered}
\end{equation*}
For each of the partitions, the perfect block-refining bijection must decompose every block into smaller blocks to eliminate adjacencies. We obtain the same result in either case, that is, the partition containing only singletons, contradicting the one-to-one correspondence.

As we have shown that no bijection can avoid an exceptional pair for such an $n$, we can say that our bijection is optimal with respect to the refinement order.
Note that Callan's bijection does not have this property, where elements are removed and merged between different blocks.

\section{A bijection between singleton-free partitions and adjacency-free partitions}
\label{sec:bijection}
This section proves Theorem \ref{thm:main}. 
The general idea is roughly: For every maximal sequence of consecutive elements in a block of a singleton-free partition, we remove every second element and create a singleton for it. This eliminates all adjacencies while still retaining the necessary information for recovery. The other direction naturally follows by looking at sequences of consecutive singletons and appropriately joining them.

We consider $n\ge 3$ to avoid some degenerate cases. (The cases $n=1,2$ can be checked directly.)
We also exclude some other boundary cases that will not be convenient to deal with later. As we will see in the main bijection, these exclusions avoid the cases in which the singleton-free partition consists of the single block $[n]$ and the adjacency-free partition contains only singletons. In particular, for even $n$, the excluded singleton-free partitions are 
\begin{equation*}
    \begin{gathered}
    \{[n]\},\\
    \left\{\{1,2\}, \{3,4\},\dots, \{n-1,n\}\right\},\\
    \left\{\{n,1\},\{2,3\},\dots, \{n-2,n-1\}\right\},
    \end{gathered}
\end{equation*}
and the excluded adjacency-free partitions are
\begin{equation*}
    \begin{gathered}
    \left\{\{1\},\{3\}, \{5\},\dots, \{n-1\}, \{2,4,6,\dots,n\}\right\},\\
    \left\{\{1\},\{2\},\dots,\{n\}\right\},\\
    \left\{\{2\},\{4\},\{6\},\dots, \{n\}, \{1,3,5,\dots,n-1\}\right\}.
    \end{gathered}
\end{equation*}
For odd $n$, the only excluded singleton-free partition is $\{[n]\}$ and the excluded adjacency-free partition is $\left\{\{1\},\{2\},\dots,\{n\}\right\}$. (Note that all these special partitions are noncrossing.) We can just match them one by one in the above order in the final bijection. One can notice that we have inserted the partition
\[
    \left\{\{1\},\{2\},\dots,\{n\}\right\}
\]
in the middle to make sure that we will have the only exception mentioned in Theorem \ref{thm:main}: the third pair does not satisfy the refinement order.

Now comes the main bijection when the boundary cases are not considered:
\begin{itemize}
    \item a map $f$: a singleton-free partition $P$ $\longrightarrow$ an adjacency-free partition $Q$.

    In each block of $P$, we call a sequence $i,\dots,i+t \pmod{n}$ of length at least $2$ an adjacency sequence if we cannot extend it to a longer sequence. As we have excluded the case of the block being the whole $[n]$, the choice of $i$ is unique. For such a sequence, we remove elements $i+t-1,i+t-3,\dots, i+t-(2k+1) \pmod{n}$ where $k$ is the largest integer so that the elements still belong to the adjacency sequence. In other words, we remove the next-to-last element and delete every second element backward along the cyclic order as many as possible. For each of the removed elements, we create a singleton containing that element. Applying this to all adjacency sequences in every block (in any order), we make the partition adjacency-free. The result is independent of the order, since distinct adjacency sequences are disjoint, and the operation on one such sequence does not affect any other.

    \begin{quote}
        For example, if $n=10$ and a block contains an adjacency sequence $8,9,10,1,2$, we remove $9,1$ from the block and add two new blocks $\{9\}$ and $\{1\}$. Another example is the sequence $10,1$: we simply remove $10$ and add a new block $\{10\}$, which makes the original block become a singleton $\{1\}$.
    \end{quote}

    The map $f$ is an injection as we can recover $P$ from $Q$ as in the map $g$ in the other direction, which will be presented later. The condition to proceed with the recovery is that $Q$ does not contain only singletons. Indeed, a block in the original partition with at least $3$ elements will have at least $2$ elements after the operations. Therefore, if the resulting partition contains only singletons, the original partition contains only blocks of size $2$ and each of them is decomposed during the operations. In such a situation, $n$ must be even and the only possibilities for the original partitions are
    \[
        \left\{\{1,2\}, \{3,4\},\dots, \{n-1,n\}\right\}
    \]
    and
    \[
        \left\{\{n,1\},\{2,3\},\dots, \{n-2,n-1\}\right\},
    \]
    which we have already excluded. We will verify that the recovered partition is precisely the original partition later. For now we can note that $f(P)\preceq P$ since we only decompose blocks.
    
    \item a map $g$: an adjacency-free partition $Q$ $\longrightarrow$ a singleton-free partition $P$

    Inside $Q$, we call a sequence $\{i\},\dots,\{i+t\} \pmod{n}$ a singleton sequence if we cannot extend it to a longer sequence.
    As we have excluded the case of the partition containing only singletons, the choice of $i$ is unique.
    If the length of such a sequence is even, we join them pair by pair, that is, $\{i,i+1\}, \{i+2,i+3\},\dots, \{i+t-1,i+t\}$. If the length is odd, we join them pair by pair until the next-to-last element, that is, $\{i,i+1\}, \{i+2,i+3\},\dots, \{i+t-2,i+t-1\}$, and we join the last element $i+t$ to the block containing $i+t+1$. Applying this to every singleton sequence (in any order), we make the partition singleton-free. The result is independent of the order in which the singleton sequences are processed. Indeed, the operations either join disjoint pairs of singleton blocks or add the final elements of odd-length singleton sequences to nonsingleton blocks, and additions to the same target block commute. 

    \begin{quote}
    For example, if the partition of $[8]$ is
    \[
        \left\{\{5\},\{6\},\{7\},\{8\},\{1\}, \{2,4\}, \{3\}\right\},
    \]
    where there are two singleton sequences, the longer one being
    \[
        \{5\},\{6\},\{7\},\{8\},\{1\}
    \]
    and the shorter one being $\{3\}$. The map gives
    \[
        \left\{\{5,6\},\{7,8\},\{1,2,3,4\}\right\},
    \]
    where the last block is obtained by joining $1$ to $\{2,4\}$ first when considering the longer singleton sequence, and joining $3$ to it later, when considering the shorter singleton sequence.
    \end{quote}

    The map is an injection as we can recover $Q$ from $P$ as in the map $f$ in the other direction. The condition to proceed with the recovery is that $P$ does not consist of the single block $[n]$. In order to obtain the block $[n]$, we need to join singletons into another block $B$. In such a situation, the singletons themselves cannot form any singleton sequence of length greater than $1$, since otherwise, the operation would create a block of size $2$. The block $B$ must be adjacency-free, and it is also the complement of the union of all the singletons. In other words, $n$ must be even and the only possibilities for the original partitions are
    \[
        \left\{\{1\},\{3\}, \{5\},\dots, \{n-1\}, \{2,4,6,\dots,n\}\right\}
    \]
    and
    \[
        \left\{\{2\},\{4\},\{6\},\dots, \{n\}, \{1,3,5,\dots,n-1\}\right\},
    \]
    which we have excluded. We will verify that the recovered partition is precisely the original partition later. For now we can note that $Q\preceq g(Q)$ since we only merge blocks.
\end{itemize}

To finish the proof, we now verify that the two maps are inverse.
\begin{proposition}
    The maps $f,g$ are inverse.
\end{proposition}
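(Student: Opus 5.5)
We prove $g\circ f=\mathrm{id}$ on the non-excluded singleton-free partitions and $f\circ g=\mathrm{id}$ on the non-excluded adjacency-free partitions. Since both maps act locally, the plan is to identify exactly which singleton sequences $f(P)$ contains and which adjacency sequences $g(Q)$ contains, and then check that the other map undoes the local operation.

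\textbf{Step 1: $g\circ f=\mathrm{id}$.} Take $P$ and let $Q=f(P)$. First we classify the singletons of $Q$.
\begin{enumerate}
\item Each removed element gives a singleton.
\item An adjacency sequence of length exactly $2$ in a block of size $2$ leaves its last element as a singleton.
\item Nothing else is a singleton. A block of size $\ge 3$ keeps at least $2$ elements, and $P$ has no singletons.
\end{enumerate}
Now fix an adjacency sequence $S=(i,\dots,i+t)$ of a block $B$ of $P$. The removed elements are $i+t-1,i+t-3,\dots$, so inside $S$ the singletons alternate with elements that stay in the remnant of $B$. There are two cases for how the removal ends at the start of $S$.
\begin{enumerate}
\item If $t$ is odd, then $i$ is removed. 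The singletons $\{i\},\{i+2\},\dots$ are isolated; each is followed by $i+1,i+3,\dots$, which still lie in $B$.
\item If $t$ is even, then $i$ stays and $i+1$ is removed.
\end{enumerate}

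This naive picture does not give singleton sequences of length $\ge 2$ the way $g$ expects, so we must check what a singleton sequence of $Q$ actually looks like. Adjacent singletons can only occur across different original blocks. Example: $P=\{\{1,2\},\{3,4,5\},\dots\}$ gives the removals $1$ and $4$. Then $\{1\}$ stands alone and $2$ remains in $\{2\}$, which is a singleton because its block has size $2$. So the sequence $\{1\},\{2\}$ is paired by $g$ back into $\{1,2\}$.

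The general description is therefore: a maximal singleton sequence of $Q$ consists of full size-$2$ blocks of $P$ that are cyclically consecutive, possibly followed by one removed element $j$ whose successor $j+1$ remains in a nonsingleton block. The pairs $\{j,j+1\}$ where $j$ and $j+1$ are both singletons in $Q$ come exactly from size-$2$ blocks of $P$. Every isolated removal $j$ is preceded by an element $j-1$ that is either in its own block or in a different block, but not a singleton paired with $j$.

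The key lemma to prove is the following. In any maximal singleton sequence of $Q$, every element except possibly the last is the first element of a size-$2$ block $\{j,j+1\}$ of $P$. If the length is odd, the last element $j$ is a removed element with $j+1$ still in its original block. The parity bookkeeping is forced: removal from the end backward guarantees that the element right after a removed element always stays. Given this lemma, $g$ pairs the size-$2$ blocks back together and attaches each removed $j$ to the block of $j+1$, which is its original block. Hence $g(f(P))=P$.

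\textbf{Step 2: $f\circ g=\mathrm{id}$.} Take $Q$ and let $P=g(Q)$. The blocks of $P$ are:
\begin{enumerate}
\item pairs $\{j,j+1\}$, each an adjacency sequence of length $2$ inside a block of size $2$;
\item nonsingleton blocks $B$ of $Q$ enlarged by attached elements $j$, each of which sits immediately before some element $j+1\in B$.
\end{enumerate}
Because $Q$ was adjacency-free, each adjacency sequence of an enlarged block has the form $j,j+1$, possibly extended backward by further attached elements. Extension backward by two is impossible, since $j-1$ would also have to be an attached element whose successor $j$ lies in $B$, but $j\notin B$ in $Q$. So every adjacency sequence of an enlarged block has length exactly $2$, and $f$ removes its next-to-last element $j$. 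This restores $Q$. For a pair $\{j,j+1\}$, $f$ removes $j$ and leaves $\{j+1\}$, recovering the two singletons. Hence $f(g(Q))=Q$.

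\textbf{Main obstacle.} The hardest part is the structural lemma in Step 1: verifying that the maximal singleton sequences of $f(P)$ decompose exactly as $g$ expects, with the correct parity and the correct final element. This is delicate when removed elements of different original blocks, or of long adjacency sequences, sit next to each other cyclically. I would handle it by tracking, for each element $x$, the pair (status of $x$, status of $x+1$) and checking every local configuration.

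One more point needs care for Step 2. Adjacency sequences in $P$ might be longer than $2$ when an enlarged block absorbs elements from several singleton sequences, or when an attached $j$ is adjacent to another pair $\{j-2,j-1\}$. The second case is harmless, since those lie in different blocks. In either case the argument must also confirm that the whole configuration avoids the excluded cases, so that the maximal sequences involved are well defined.
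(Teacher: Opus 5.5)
\textbf{The genuine gap is in Step 2.} You claim that every adjacency sequence of an enlarged block has length exactly $2$. This is false, and the paper's own example refutes it: for $n=8$ and $Q=\{\{5\},\{6\},\{7\},\{8\},\{1\},\{2,4\},\{3\}\}$, the map $g$ produces the block $\{1,2,3,4\}$, which is a single adjacency sequence of length $4$. More simply, if $k-1,k+1\in B$ and $\{k\}$ is an isolated singleton of $Q$, the enlarged block contains the run $k-1,k,k+1$.

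Your argument only excludes $j-1$ being another \emph{attached} element. It overlooks $j-1\in B$, and it ignores forward extension ($j+2$ attached, with $j+3\in B$). Your final paragraph then concedes that longer sequences occur. That contradicts your earlier conclusion and leaves exactly the nontrivial case open.

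The missing idea is an alternation argument. Write the enlarged block as $B\cup A$, where $A$ is the set of elements attached to $B$. Then:
\begin{itemize}
\item no two cyclically consecutive elements lie in $B$, because $Q$ is adjacency-free;
\item no two lie in $A$, because $a\in A$ forces $a+1\in B$, a nonsingleton block of $Q$;
\item a run cannot end at some $a\in A$, since $a+1\in B$ would extend it.
\end{itemize}
So every adjacency sequence alternates between $A$ and $B$ and ends in $B$. The elements at odd distance from its end are therefore exactly those of $A$, and these are precisely what $f$ removes (the next-to-last element and every second one backward). This is what the paper's integer $r$ and the run $k,k+1,\dots,k+2r+1$ ending in $B$ capture. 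You also need $g(Q)\neq\{[n]\}$, which the exclusions guarantee, so that each run has a unique start.

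\textbf{Step 1 follows the paper's route, but the key lemma is unproved and misstated.} The lemma says that maximal singleton sequences of $f(P)$ consist of consecutive size-$2$ blocks $\{j,j+1\}$ of $P$, possibly followed by one element removed from a block of size at least $3$. Two of your statements are inaccurate:
\begin{itemize}
\item ``Every element except possibly the last is the first element of a size-$2$ block'' already fails for length $3$.
\item ``Adjacent singletons can only occur across different original blocks'' contradicts your own example $\{1\},\{2\}$.
\end{itemize}
No exhaustive local check is needed; a short induction, as in the paper, suffices.
\begin{itemize}
\item The first element $j$ of a maximal sequence has $j-1$ in a nonsingleton block, so $j$ is not the second element of a block $\{j-1,j\}$.
\item If $\{j+1\}$ is also a singleton, $j$ cannot have been removed from a block of size at least $3$, since $j+1$ would then stay in a block retaining at least two elements. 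Hence $\{j,j+1\}\in P$.
\item Since $j+1$ is the second element of that block, $j+2$ cannot be the second element of a size-$2$ block, and the argument repeats.
\item A final unpaired element $x$ has $x+1$ nonsingleton by maximality. So $x$ was removed from a block of size at least $3$ containing $x+1$, and $g$ reattaches it correctly.
\end{itemize}
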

\begin{proof}
Indeed,
\begin{itemize}
    \item $g(f(P))=P$.

    Given the resulting $Q=f(P)$, we show that the process recovers $P$. If a singleton sequence of $Q$ contains only one singleton $\{j\}$, its adjacent elements $j-1,j+1\pmod{n}$ stay inside some non-singleton blocks (not necessarily together). We join $j$ to the block of $j+1$ to recover the original position of $j$.
    If we have a singleton sequence $\{j\},\{j+1\}, \{j+2\}, \dots, \{j+t\} \pmod{n}$ of length at least $2$, the element $j-1$ stays inside some non-singleton block. This excludes the possibility that we have the singleton $\{j\}$ due to removing $j-1$ from the block $\{j-1,j\}$ in $P$. Therefore, the only possibility is that the original block in $P$ is $\{j,j+1\}$, since $\{j+1\}$ is also a singleton in $Q$ while every block of size at least $3$ retains at least two elements. We join $j$ and $j+1$ into a block and continue with the remaining $\{j+2\},\dots,\{j+t\}\pmod{n}$ recursively. Since every singleton is joined back to the original position, we have proved $g(f(P))=P$.

    \item $f(g(Q))=Q$.

    Given the resulting $P=g(Q)$, we show that the process recovers $Q$. If a singleton sequence in $Q$ has even length, the corresponding blocks of the pairs in $P$ will be decomposed accordingly. In other words, every singleton in an even-length sequence is recovered. A similar situation holds for an odd-length sequence, except the last singleton $\{k\}$, which is joined to the block $B$ of $Q$ containing $k+1$. 
    Let $r$ be the largest nonnegative integer such that, for every $0\le t\le r$, we have $k+1+2t\in B$ and $\{k+2t\}$ is a singleton. (Note that $r$ is well defined, since if these conditions held for every $t$, we would obtain one of the excluded partitions listed at the beginning when $n$ is even, whereas it is impossible for odd $n$.)
    It follows that the adjacency sequence containing $k$ after the joining operation contains $k,k+1,\dots,k+2r,k+2r+1$ but does not contain $k+2r+2$. It follows that the element $k$ will be removed to create a singleton in the process of decomposing $P$. Since every singleton is recovered, we have proved $f(g(Q))=Q$.\qedhere
\end{itemize} 
\end{proof}

\bibliographystyle{unsrt}
\bibliography{singcons}
\end{document}